\theoremstyle{plain}
\newtheorem{maintheorem}{Theorem}
\newtheorem{thm}{Theorem}[section]
\newtheorem{prop}[thm]{Proposition}
\newtheorem{lem}[thm]{Lemma}
\theoremstyle{definition}
\theoremstyle{remark}
\def\cc{{\curvearrowright}}
\def\N{{\mathbb N}}
\def\R{{\mathbb R}}
\def\chix{{\raise.5ex\hbox{$\chi$}}}
\begin{document}
\title{Property (T) and the Furstenberg Entropy of Nonsingular
  Actions}

  \author{Lewis Bowen\footnote{University of Texas at
    Austin. Supported in part by NSF grant DMS-0968762, NSF CAREER
    Award DMS-0954606 and BSF grant 2008274.}, Yair
  Hartman\footnote{Weizmann Institute of Science. Supported by the
    European Research Council, grant 239885.} and Omer
  Tamuz\footnote{Microsoft Research.}}
  
\maketitle

\begin{abstract}
  We establish a new characterization of property (T) in terms of the
  Furstenberg entropy of nonsingular actions. Given any generating
  measure $\mu$ on a countable group $G$, A.\ Nevo showed that a
  necessary condition for $G$ to have property (T) is that the
  Furstenberg $\mu$-entropy values of the ergodic, properly
  nonsingular $G$-actions are bounded away from zero. We show that
  this is also a sufficient condition.
\end{abstract}

%
%
%

\section{Introduction}

A measurable action on a probability space $G \cc (X,\eta)$ is called
a {\em nonsingular action} if the measure $\eta$ is quasi-invariant
with respect to any $g \in G$; that is, if $\eta$ and $g_*\eta$ are
equivalent (i.e., mutually absolutely continuous) measures. We say
that the action is {\em properly nonsingular} if $\eta$ is not
equivalent to a $G$-invariant probability measure.

A probability measure $\mu$ on a countable group $G$ is {\em
  generating} if its support generates $G$ as a semigroup.

Let $G \cc (X,\eta)$ be a nonsingular action and let $\mu$ be a
probability measure on $G$. The {\em Furstenberg
  entropy}~\cite{furstenberg1963noncommuting} or {\em $\mu$-entropy}
is defined by
$$h_\mu(X,\eta)=\sum_{g \in G}\mu(g)\int_X -\log
\frac{d g^{-1}_*\eta}{d\eta}(x)~d\eta(x).$$

Jensen's inequality implies that $h_\mu(X,\eta)\ge 0$ and that, for
generating measures, equality holds if and only if $\eta$ is an
invariant measure. Furstenberg entropy is an important conjugacy
invariant of nonsingular actions~\cite{nevo2000rigidity}, and in
particular of {\em stationary} actions; $G \cc (X,\eta)$ is
$\mu$-stationary if $\mu * \eta = \eta$.

The pair $(G,\mu)$ is said to have an {\em entropy gap} if there
exists some constant $\epsilon = \epsilon(G,\mu) >0$ such that the
$\mu$-entropy of any ergodic, properly nonsingular $G$-action on a
probability space is at least $\epsilon$.

A group $G$ has {\em property (T)}, if any unitary representation of
$G$ which has almost invariant vectors admits a non-zero invariant
vector.  The purpose of this paper is to establish a similar
characterization of property (T), using nonsingular actions instead of
unitary representations, and thinking of those with small entropy
values as being ``almost invariant''.

\begin{maintheorem}\label{thm:main}
  Let $G$ be a countable group. Then $G$ has property (T) if and only
  if for every (equivalently, for some) generating measure $\mu$,
  $(G,\mu)$ has an entropy gap.
\end{maintheorem}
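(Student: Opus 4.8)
The interesting implication is sufficiency: that an entropy gap forces property (T). The reverse implication, that (T) forces a gap, is Nevo's theorem recalled in the abstract, and the ``for every $\Leftrightarrow$ for some'' equivalence will follow once both implications are in hand. So the plan is to prove the contrapositive of sufficiency: if $G$ does not have property (T), then for every generating measure $\mu$ the pair $(G,\mu)$ fails to have an entropy gap, i.e.\ there exist ergodic, properly nonsingular $G$-actions of arbitrarily small $\mu$-entropy. The whole game is to manufacture such actions out of the failure of (T).

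For the construction I would pass through affine isometric actions. By the Delorme--Guichardet theorem, the negation of property (T) supplies an orthogonal representation $\pi$ of $G$ on a real Hilbert space $\mathcal{H}$ together with a $1$-cocycle $b\colon G\to\mathcal{H}$ (so $b(gh)=b(g)+\pi(g)b(h)$) that is not a coboundary; after discarding the invariant part I may assume $\pi$ has no nonzero invariant vectors. To the affine isometric action $g\cdot v=\pi(g)v+b(g)$ I would associate its Gaussian action on the Gaussian probability space $(\mathcal{H},\gamma)$ (with Cameron--Martin space $\mathcal{H}$). Since $\pi$ is orthogonal it preserves $\gamma$, so only the translation part $b(g)$ moves the measure, and the Cameron--Martin theorem gives $\frac{dg^{-1}_*\gamma}{d\gamma}(x)=\exp\!\big(\langle b(g^{-1}),x\rangle-\tfrac12\|b(g^{-1})\|^2\big)$. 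Feeding this into the definition of $h_\mu$ and using $\int\langle w,x\rangle\,d\gamma(x)=0$ collapses the entropy to the clean expression $h_\mu(\mathcal{H},\gamma)=\tfrac12\sum_{g}\mu(g)\,\|b(g)\|^2$ (using $\|b(g^{-1})\|=\|b(g)\|$, valid as $\pi$ is orthogonal), a positive multiple of the $\mu$-average of the conditionally negative definite function $\psi(g)=\|b(g)\|^2$.

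With this formula the remaining points are structural. Proper nonsingularity should be exactly the statement that $b$ is not a coboundary: were $b$ a coboundary, the affine action would have a fixed point, hence be conjugate to the measure-preserving linear action $\pi$, and conversely a $G$-invariant probability measure in the class of $\gamma$ ought to produce a fixed point. To make the entropy small I would rescale: replacing $b$ by $tb$ multiplies $\psi$ by $t^2$, keeps it a noncoboundary, and drives $h_\mu=\tfrac{t^2}{2}\sum_g\mu(g)\|b(g)\|^2$ to $0$ --- provided that sum is finite. Finiteness is automatic for finitely supported $\mu$ (and, via subadditivity of $\|b(\cdot)\|$, for $\mu$ of finite second moment); for a general generating $\mu$ I would instead exploit that the failure of (T) yields an abundant supply of unbounded cocycles and select one whose $\psi$ grows slowly enough to be $\mu$-integrable. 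Finally I must secure ergodicity, either by a weak-mixing criterion for these nonsingular Gaussian actions (powered by the absence of invariant vectors for $\pi$) or by passing to the ergodic decomposition, noting that Furstenberg entropy integrates over components, so some component has entropy no larger than the whole, and arguing that at least one such component stays properly nonsingular.

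The step I expect to be genuinely hard is ergodicity of the resulting nonsingular action: unlike the measure-preserving Gaussian functor, where ergodicity is equivalent to the transparent spectral condition that $\pi$ have no finite-dimensional subrepresentation, the affine (nonsingular) Gaussian action entangles the linear spectral data with the translation cocycle, and controlling its invariant sets is delicate. Closely related, and the second main obstacle, is uniformity over all generating measures $\mu$: matching the growth of an admissible noncoboundary cocycle against the tail of an arbitrary $\mu$ so that the entropy can still be sent to zero. If either obstacle resists a direct treatment for every $\mu$, the fallback is to prove the statement first for one conveniently chosen (say finitely supported) generating measure and then separately establish that the entropy-gap property is independent of the generating measure, which closes the ``for every $\Leftrightarrow$ for some'' equivalence.
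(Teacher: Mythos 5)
Your route is genuinely different from the paper's and is, in outline, viable. The paper stays entirely on the ``measured'' side of the failure of (T): via Connes--Weiss and Jones--Schmidt it produces an ergodic p.m.p.\ action that is not strongly ergodic, extracts asymptotically invariant sets $D_n$ with $\eta(D_n\vartriangle gD_n)$ decaying at a rate matched to a proper function $\rho$, packages these into a cocycle $c\colon G\times X\to 2^\N_{fin}$ that is not cohomologous to a finite-image cocycle, and then takes skew products over the Bernoulli actions $2^\N_{fin}\cc(2^\N,\omega_p)$ with $p\to 1/2$. You instead go through Delorme--Guichardet and the Gaussian functor, replacing the target $(2^\N_{fin},\omega_p)$ by $(\mathcal H,\gamma)$ and the parameter $p\to 1/2$ by the rescaling $b\mapsto tb$. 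The two constructions are structurally parallel (a growth-controlled nontrivial cocycle into a ``small'' group, an explicit entropy formula that is linear in a tunable parameter, an argument excluding absolutely continuous invariant measures, and ergodic decomposition plus convexity of $h_\mu$ at the end), and your Cameron--Martin computation of $h_\mu(\mathcal H,\gamma)=\tfrac12\sum_g\mu(g)\|b(g)\|^2$ is correct. What the Gaussian route buys is a cleaner entropy formula and no need for the Jones--Schmidt machinery; what it costs is exactly the two difficulties you flag, which in the paper's approach are handled by Lemma~\ref{lem:invariant} and by the quantitative form of \cite[Lemma 2.4]{JS87}.

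Those two difficulties are real gaps as the proposal stands. First, proper nonsingularity: your claim that an invariant probability measure in the class of $\gamma$ ``ought to produce a fixed point'' is true but not formal, and, more importantly, for the ergodic-decomposition fallback you need the stronger statement that there is no invariant probability measure \emph{absolutely continuous} with respect to $\gamma$ (an ergodic component could a priori carry its own equivalent invariant measure even if $\gamma$ does not). This can be proved --- an a.c.\ invariant probability measure yields an invariant unit vector $\sqrt{d\nu/d\gamma}$ for the Koopman representation $\kappa$, whereas the coherent-vector computation $\langle\kappa(g)1,\kappa(h)1\rangle=\exp(\tfrac14(2\langle b(g),b(h)\rangle-\|b(g)\|^2-\|b(h)\|^2))$ shows $\kappa(g_n)1\to 0$ weakly whenever $\|b(g_n)\|\to\infty$, contradicting $\langle\kappa(g)1,\sqrt{d\nu/d\gamma}\rangle\equiv\langle 1,\sqrt{d\nu/d\gamma}\rangle>0$ --- but some such argument must be supplied; do not try to prove ergodicity of the Gaussian action itself, since it genuinely fails in examples (for $G=\Z$ acting by translation on $(\R,\gamma)$ the action is dissipative and non-ergodic). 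Second, integrability for an arbitrary generating $\mu$: you need a non-coboundary cocycle with $\sum_g\mu(g)\|b(g)\|^2<\infty$, and rescaling does not help with heavy tails. The standard fix is to post-compose the conditionally negative definite function $\psi=\|b\|^2$ with an unbounded Bernstein function $f$ chosen so that $f(\psi(g))\le\rho(g)$ for a proper $\rho$ with $\sum_g\mu(g)\rho(g)<\infty$ (Schoenberg's theorem keeps $f\circ\psi$ conditionally negative definite, and unboundedness of $f\circ\psi$ guarantees the new cocycle is not a coboundary); note that naive truncations such as $1-e^{-t\psi}$ fail because they are bounded, hence coboundaries. This step plays exactly the role of the paper's Proposition~\ref{prop:key} and cannot be omitted. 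With these two points filled in, your argument closes, including the ``for every $\Leftrightarrow$ for some'' equivalence, since both your construction and Nevo's direction work for an arbitrary generating $\mu$.
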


For the case that $G$ has property (T) this theorem follows from
Nevo~\cite[Propositions 4.1, 4.2]{Ne03}.  Note that the statements of
these propositions omit the requirements that $\eta$ is ergodic and
properly nonsingular; the result is no longer true without either of
these hypotheses.  For completeness, we include in
Appendix~\ref{sec:nevo} a reproduction of Nevo's proof of this
direction.

The idea of the proof of the other direction (i.e., that groups
without property (T) have no entropy gap) is the following. We first
consider the group $2^\N_{fin}$; the elements of this group are the
finite subsets of $\N$ and the group operation is symmetric
difference. For this group, we construct in Section~\ref{sec-2^N} very
simple nonsingular actions which show that $2^\N_{fin}$ has no entropy
gap, for a family of natural measures on $2^\N_{fin}$.
  
Then, in Section~\ref{sec:cocycle}, we observe that any countable
group $G$ without property (T) admits a non-trivial cocycle into
$2^\N_{fin}$. Using these cocycles one can construct skew-product
$G$-actions over $2^\N_{fin}$ spaces; the entropy formula for
skew-product actions is established in
Section~\ref{sec:entropy-of-skew}.  Finally, in
Section~\ref{sec:proof} we prove Theorem~\ref{thm:main}, by showing
that the $2^\N_{fin}$-actions that we construct in
Section~\ref{sec-2^N}, and which have arbitrarily low Furstenberg
entropy, can be lifted to skew-product $G$-actions with arbitrarily
low Furstenberg entropy.

\subsection{Related literature}
\subsubsection{Characterizing property (T)}
Recently, Ozawa~\cite{ozawa2013noncommutative} showed that property
(T) can be characterized by the spectral gap of $\Delta_\mu \in
\R[G]$, the Laplacian associated with a finitely supported, symmetric
generating measure $\mu$. This characterization is one of many which
have been established since Kazhdan's first definition of property
(T)~\cite{kazhdan67connection}, some of which have been instrumental
in proving property (T) for groups that were previously not known to
have it. We refer the reader to Bekka, de La Harpe and
Valette~\cite{bekka2008kazhdan} for a complete discussion.

\subsubsection{Stationary actions}
A particularly interesting class of nonsingular actions are the
stationary ones. Furstenberg entropy has been a useful tool in the
study of stationary actions (e.g.,~\cite{furstenberg1963noncommuting,
  kaimanovich1983random}), and the study of the set of entropy values
realizable by properly nonsingular stationary actions has attracted
some interest~\cite{nevo2000rigidity, bowen2010random,
  hartman2012furstenberg}.

Nevo's theorem (Theorem~\ref{thm:nevo}) implies that for every
$(G,\mu)$, where $G$ has property (T), the Furstenberg entropy values
of properly nonsingular, ergodic $\mu$-stationary actions are bounded
away from zero; in this case we say that $(G,\mu)$ has a {\em
  stationary entropy gap}. In~\cite{bowen2010random}
and~\cite{hartman2012furstenberg} it is shown that some $(G,\mu)$
without property (T) have no stationary entropy gap. However, this is
not a characterization of property (T): in a previous
paper~\cite[Proposition 7.5]{Bo14} we show that there exist $(G,\mu)$
without property (T), but with a stationary entropy gap.

A question that therefore remains open is that of characterizing the
pairs $(G,\mu)$ that have a stationary entropy gap. More narrowly, it
is not even known that all amenable groups have no stationary entropy
gap.

\section{The case of $2^\N_{fin}$}\label{sec-2^N}
Let $2^\N_{fin}$ denote the set of all finite subsets of the natural
numbers $\N=\{1,2,3,\ldots\}$. Endowed with the operation of symmetric
difference, $2^\N_{fin}$ is an abelian group in which every element
other than the identity is an involution. For $T \in 2^\N_{fin}$, let
$\max(T) = \max_{t\in T}t$. We also let $\max(\emptyset) = 0$. Because $2^\N_{fin}$ is an abelian group we will use additive notation when expressing group multiplication. Thus $T+S:=T\vartriangle S=(T\setminus S) \cup (S\setminus T)$ for any $T,S \in 2^\N_{fin}$.

Let $2^\N_{fin}$ act on $2^\N$ by symmetric difference, identifying
elements of $2^\N$ with subsets of $\N$.  Let $\omega_p$ be a measure
on $2^\N$ given by $\omega_p=\prod_\N B(p)$, where $B(p)$ is the
Bernoulli $p$ measure on $\{0,1\}$.

\begin{prop}
  \label{prop:Z}
  Consider the action $2^\N_{fin} \cc (2^\N,\omega_p)$. Then, for any
  probability measure $\mu$ on $2^\N_{fin}$, the $\mu$-entropy of
  $(2^\N, \omega_p)$ is
  \begin{align*}
    h_\mu(2^\N, \omega_p) = \varphi(p)\sum_{T \in 2^\N_{fin}}\mu(T)|T| \leq   \varphi(p)\sum_{T \in 2^\N_{fin}}\mu(T)\max(T),
  \end{align*}
  where $\varphi(p) = \log\frac{p}{1-p}(2p-1)$.
\end{prop}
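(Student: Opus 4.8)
The plan is to compute the Radon--Nikodym derivative $\frac{d\,T^{-1}_*\omega_p}{d\omega_p}$ explicitly for each $T \in 2^\N_{fin}$, substitute it into the definition of Furstenberg entropy, and integrate. The first observation is that since every nonidentity element of $2^\N_{fin}$ is an involution, $T^{-1}=T$, so it suffices to understand the pushforward $T_*\omega_p$. Identifying $x \in 2^\N$ with its indicator sequence $(x_i)_{i \in \N} \in \{0,1\}^\N$, the map $x \mapsto T+x$ flips exactly the coordinates indexed by $T$ and fixes all others. Because $\omega_p$ is a product measure, the derivative factors as a product over coordinates, and the factors for $i \notin T$ are all equal to $1$; since $T$ is finite this product has only finitely many nontrivial terms and is therefore a well-defined function. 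The coordinate-wise computation will give
$$\frac{d\,T_*\omega_p}{d\omega_p}(x) = \prod_{i \in T}\left(\frac{p}{1-p}\right)^{1-2x_i} = \left(\frac{p}{1-p}\right)^{\sum_{i \in T}(1-2x_i)}.$$

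Next I would take $-\log$ and integrate against $\omega_p$. We have $-\log\frac{d\,T_*\omega_p}{d\omega_p}(x) = \log\frac{p}{1-p}\sum_{i\in T}(2x_i-1)$, and since the coordinates are i.i.d.\ with $\int_X x_i\, d\omega_p = p$, each summand integrates to $2p-1$. Hence $\int_X -\log\frac{d\,T_*\omega_p}{d\omega_p}\, d\omega_p = |T|\,(2p-1)\log\frac{p}{1-p} = \varphi(p)\,|T|$. Summing this over $T$ weighted by $\mu(T)$ yields the claimed identity $h_\mu(2^\N,\omega_p) = \varphi(p)\sum_T \mu(T)|T|$.

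For the final inequality, I would note that $T \subseteq \{1,\ldots,\max(T)\}$, so $|T| \le \max(T)$ for every $T$, and that $\varphi(p) = (2p-1)\log\frac{p}{1-p} \ge 0$ for all $p \in (0,1)$ because its two factors share the same sign. Multiplying the pointwise inequality $|T| \le \max(T)$ by the nonnegative constant $\varphi(p)$ and summing against $\mu$ gives the bound.

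The main obstacle is the coordinate-wise computation of the Radon--Nikodym derivative together with the justification that it is genuinely well defined despite $2^\N$ being infinite-dimensional; the point that makes this clean is precisely the finiteness of $T$, which collapses the infinite product to a finite one. Once the derivative is in hand, the remaining steps are routine integration using independence and a monotonicity observation.
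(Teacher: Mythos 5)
Your proposal is correct and follows essentially the same route as the paper: identify $T_*\omega_p$ as the product measure with the coordinates in $T$ flipped, reduce the integral to $|T|$ copies of the Kullback--Leibler divergence between $B(p)$ and $B(1-p)$ (which you compute explicitly as $(2p-1)\log\frac{p}{1-p}$), and then sum against $\mu$ using $|T|\le\max(T)$. The only difference is that you write out the Radon--Nikodym derivative coordinate-by-coordinate where the paper simply quotes the KL divergence, which is a harmless elaboration.
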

Note that $\lim_{p \to 1/2}\varphi(p) = 0$ and that for $p \neq 1/2$
it holds that $\varphi(p) > 0$.

\begin{proof}  
  For $T \in 2^\N_{fin}$, $T^{-1}_*\omega_p =T_*\omega_p= \prod_{n \in
    T} B(1-p)\prod_{n \not\in T} B(p)$.  Hence
  \begin{align*}
    \int_X -\log \frac{dT_*\omega}{d\omega}(x)~d\omega(x) =
    \varphi(p)|T|
  \end{align*}
  where $\varphi(p) = \log\frac{p}{1-p}(2p-1)$ is the Kullback-Leibler
  divergence between $B(p)$ and $B(1-p)$. Finally, it follows that
  for any measure $\mu$ on $2^\N_{fin}$, 
  \begin{align*}
    h_\mu(2^\N,\omega_p) = \varphi(p)\sum_{T \in 2^\N_{fin}}\mu(T)|T| \leq
    \varphi(p)\sum_{T \in 2^\N_{fin}}\mu(T)\max(T).
  \end{align*}
\end{proof}

It is easy to show that $\omega_p$ is ergodic and is not equivalent to
an invariant probability measure (see Lemma~\ref{lemma:converse-lln};
an invariant measure $\lambda$ would have to satisfy $\lambda(\{x
\,:\, x_n=1\}) = 1/2$). Hence it follows from this proposition that
$2^\N_{fin}$ does not have an entropy gap, for any $\mu$ with $\sum_{T
  \in 2^\N_{fin}}\mu(T)\max(T)<\infty$. In the remainder of this paper
we lift this result to any group without property (T).

\section{Non-property (T) and cocycles}\label{sec:cocycle}

Before stating the next proposition, let us recall some standard definitions. Given a nonsingular action $G \cc (X,\eta)$ and a countable group $\Gamma$, a {\em cocycle} $c:G \times X \to \Gamma$ is a measurable map such that 
$$c(gh,x) = c(g,hx)c(h,x)$$
for a.e. $x$ and every $g,h \in G$. Two such cocycles $c_1,c_2:G \times X \to \Gamma$ are {\em cohomologous} if there exists a measurable function $\beta:X \to \Gamma$ such that
$$c_1(g,x) = \beta(gx)c_2(g,x)\beta(x)^{-1}$$
for every $g\in G$ and a.e.\ $x$. A cocycle is a {\em coboundary} if
it is cohomologous to the trivial cocycle (whose essential image is
contained in the trivial element $\{e\}$).

\begin{prop}\label{prop:key}
  Assume $G$ does not have property (T). Let $\rho:G \to \N$
  be a proper function (so $\rho^{-1}(n)$ is finite for every $n \in
  \N$). Then there exists an ergodic probability measure preserving
  action $G \cc (X,\eta)$ and a Borel cocycle $c:G \times X
  \to 2^\N_{fin}$ such that $c$ is not cohomologous to a cocycle
  with a finite image.  Moreover, for every element $g \in G$,
    \begin{align*}
       \int_X \max(c(g,x))~d\eta(x) \le \rho(g).
    \end{align*}
\end{prop}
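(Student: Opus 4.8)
The plan is to start from the standard fact that a group without property (T) admits a sequence of unitary representations with almost-invariant but no invariant vectors, and to convert this analytic data into a cocycle into $2^\N_{fin}$. The natural route is through the Gaussian-action / orthogonal-representation machinery: failure of property (T) gives (via the Delorme--Guichardet theorem) an affine isometric action of $G$ on a real Hilbert space $\mathcal H$ with no fixed point, i.e.\ an unbounded $1$-cocycle $b:G\to\mathcal H$ for some orthogonal representation $\pi$. The idea is to discretize this unbounded $\mathbb R$-valued cocycle into a $2^\N_{fin}$-valued cocycle over a measure-preserving action, in such a way that the discrete cocycle is genuinely ``infinite'' (not cohomologous to a finite-image cocycle) while keeping the $\max$ of its value under control by the word-type function $\rho$.

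\medskip
\noindent\textbf{Construction.} First I would build the probability-measure-preserving action $G\cc(X,\eta)$: take $X$ to be a product $\prod_{n\in\N}(Y_n,\nu_n)$ of standard probability spaces on which $G$ acts (for instance Gaussian spaces associated with pieces of the representation $\pi$, or Bernoulli-type factors indexed by a suitable exhaustion of $G$), chosen so that the action is ergodic and probability-measure-preserving. Next I would define the cocycle $c:G\times X\to 2^\N_{fin}$ coordinatewise: to each coordinate $n$ I associate a Borel $\{0,1\}$-valued cocycle $c_n(g,x)\in\{0,1\}$ (thought of as membership of $n$ in the output set), and set
\begin{align*}
  c(g,x)=\{\,n\in\N \ :\ c_n(g,x)=1\,\}.
\end{align*}
The cocycle identity on $2^\N_{fin}$ (symmetric difference) holds as soon as each $c_n$ is an $\mathbb F_2$-valued cocycle, i.e.\ $c_n(gh,x)=c_n(g,hx)+c_n(h,x)\pmod 2$; these are exactly measurable cocycles into the two-element group, which arise abundantly from the mixing/representation data. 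The key design constraint is a support condition: I arrange that the coordinate $n$ can be ``active'' for the element $g$ only when $\rho(g)\ge n$, so that automatically $\max(c(g,x))\le\rho(g)$ pointwise (hence in integral).

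\medskip
\noindent\textbf{The two properties.} The integral bound $\int_X\max(c(g,x))\,d\eta \le \rho(g)$ I expect to follow immediately, and even pointwise, from the support condition just described: if no coordinate $n>\rho(g)$ ever contributes to $c(g,\cdot)$, then $\max(c(g,x))\le\rho(g)$ for a.e.\ $x$. The delicate property is that $c$ is \emph{not} cohomologous to a finite-image cocycle. Here I would argue by contradiction: a finite-image cocycle corresponds (through the $\mathbb F_2^{\oplus}$-structure, or after passing to the associated orthogonal/Koopman representation) to a cohomologically trivializable, and hence essentially bounded, piece of cocycle data; trivializing $c$ by a transfer function $\beta:X\to 2^\N_{fin}$ would force the underlying affine action of $G$ to have a (almost-)fixed point, contradicting the non-vanishing of the original Delorme--Guichardet cocycle. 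Concretely, I would relate $c$ back to $b$ so that finiteness of the image of $c$ (up to coboundary) yields boundedness of $b$ (up to coboundary), i.e.\ an invariant vector for $\pi$, contradicting the failure of property (T).

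\medskip
\noindent\textbf{Main obstacle.} The hard part will be arranging the three requirements \emph{simultaneously}: ergodicity of the p.m.p.\ action, the pointwise support bound $\max(c(g,x))\le\rho(g)$, and non-triviality of $c$ modulo finite-image coboundaries. The tension is that forcing each coordinate $n$ to activate only for ``large'' group elements (to get the $\rho$-bound) risks making the cocycle $c$ essentially finite or a coboundary; conversely, making $c$ robustly infinite wants many coordinates active, which fights the $\max$-bound. Resolving this will require choosing the exhaustion/indexing of $\N$ by $G$ and the coordinate cocycles $c_n$ carefully so that the ``unboundedness'' of the underlying Delorme--Guichardet cocycle is distributed across infinitely many coordinates in a $\rho$-compatible way, while each individual coordinate remains a nontrivial $\mathbb F_2$-cocycle and the whole family cannot be simultaneously trivialized by a single $2^\N_{fin}$-valued transfer function.
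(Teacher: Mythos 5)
Your proposal is a plan rather than a proof, and the one concrete design decision it commits to --- the pointwise support condition that coordinate $n$ is active for $g$ only when $\rho(g)\ge n$ --- is fatal. If $G$ is generated by a finite set $S$ (or more generally if the elements with $\rho(g)<n$ generate $G$ for all large $n$, which a generating measure forces you to handle), then for $n>\max_{s\in S}\rho(s)$ you would have $c_n(s,\cdot)\equiv 0$ for every generator $s$, and the $\mathbb F_2$-cocycle identity then propagates this to $c_n(g,\cdot)\equiv 0$ for \emph{all} $g\in G$. Hence all but finitely many coordinates vanish identically and $c$ takes values in a finite subgroup $2^{[N]}$ --- exactly the degenerate outcome the proposition must exclude. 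You sense this tension yourself in your ``main obstacle'' paragraph, but the resolution is not a cleverer indexing: it is to abandon the pointwise bound entirely. The inequality $\int_X\max(c(g,x))\,d\eta(x)\le\rho(g)$ only requires that the \emph{measure} of the set where coordinate $n$ is active for $g$ decay summably in $n$ (for $n\ge\rho(g)$), while remaining strictly positive so that the cocycle stays genuinely infinite.

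The paper's actual construction realizes this with different input: instead of Delorme--Guichardet and Gaussian actions, it uses the Connes--Weiss theorem to produce an ergodic p.m.p.\ action that is not strongly ergodic, and then the Jones--Schmidt lemma to extract asymptotically invariant sets $D_n$ with $\eta(D_n)\to 1/2$ and $\eta(D_n\vartriangle gD_n)<2^{-n}$ whenever $\rho(g)\le n$. The coordinate cocycles are then the coboundaries $c_n(g,x)=1_{D_n}(gx)+1_{D_n}(x)\bmod 2$; each individual $c_n$ is trivial, but the transfer function $\phi(x)=\{j:x\in D_j\}$ lives in $2^\N$ rather than $2^\N_{fin}$, and the condition $\eta(D_n)\to 1/2$ is precisely what prevents $\phi$ from being corrected to a finite-valued function, which is how non-cohomology to a finite-image cocycle is proved (by ergodicity, a putative trivialization would force $\phi(x)\vartriangle R$ to be finite a.e.\ for some fixed $R$, contradicting $\eta(D_n)\to 1/2$). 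Your non-triviality argument (``boundedness of $c$ up to coboundary yields boundedness of $b$ up to coboundary'') is not substantiated and would need an actual mechanism connecting a $2^\N_{fin}$-transfer function to an almost-fixed point of the affine action; no such mechanism is sketched. As it stands the proposal has genuine gaps in all three required properties: the action and the coordinate cocycles are not constructed, the $\rho$-bound is enforced by a condition that destroys the cocycle, and the non-triviality is asserted rather than argued.
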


\begin{proof}
  This result follows from the proof of~\cite[Theorem 2.1]{JS87} as we
  now explain. It is a well-known result of Connes and
  Weiss~\cite{connes80property} that if $G$ does not have property (T)
  then there exists an ergodic probability measure preserving action
  $G \cc (X,\eta)$ which is not strongly ergodic (see
  also~\cite[Theorem 6.3.4]{bekka2008kazhdan}).  By~\cite[Lemma
  2.4]{JS87} there exist Borel sets $D_n \subset X$ such that the
  following hold.
  \begin{enumerate}
  \item $D_n$ is asymptotically invariant: $\lim_{n\to\infty} \eta(
    gD_n \vartriangle D_n) = 0$ for every $g \in G$;
  \item $\lim_{n\to\infty}\eta(D_n) = 1/2$.
  \item For every $g \in G$ with $\rho(g)\le n$, $\eta(D_n
    \vartriangle gD_n) < 2^{-n}$.
  \end{enumerate}
  To be precise, the last statement follows from an easy modification
  of the proof of \cite[Lemma 2.4]{JS87}; see equation (2.10) there.

  Define a map $\phi \colon X \to 2^\N$ by $\phi(x) = \{j \in \N\,:\, x
  \in D_j\}$. We let $2^\N_{fin}$ act on $2^\N$ by symmetric
  difference.

  Let $g\in G$.  If $m \ge \rho(g)$ then
  $$\eta(\{x \in X\,:\, ~\phi(gx) \notin 2^\N_{fin} \vartriangle \phi(x)\}) \le \sum_{k=m}^\infty \eta(D_k \vartriangle gD_k)<2^{-m+1}.$$
  Since $m \ge \rho(g)$ is arbitrary, the left hand side must equal
  zero. Therefore $\phi(gx) \in 2^\N_{fin}\vartriangle \phi(x)$ for
  $\eta$-a.e.\ $x\in X$.

  Define the cocycle $c:G\times X \to 2^{\N}_{fin}$ by $c(g,x) = T$ if $\phi(gx)=T \vartriangle
  \phi(x)$. If $\rho(g)=m$ then
  \begin{eqnarray*}
    \int_X \max(c(g,x))~d\eta(x) &\le& m -1+  \int_{\{x:\max(c(g,x)) \ge m\}} \max(c(g,x))~d\eta(x)\\
    &\le& m -1+ \sum_{n=m}^\infty \eta(\{x\in X:~\max(c(g,x)) \ge n \}) \\
    &\le& m -1 + \sum_{n=m}^\infty \eta( D_n \vartriangle g D_n) \le m -1 + 2^{-m+1} \le m.
  \end{eqnarray*}

  The cocycle $c$ cannot be cohomologous to a cocycle with a finite
  image, since $\phi$ is nontrivial. To be precise, for $m\in \N$, we let $2^m < 2^\N_{fin}$ denote the subgroup consisting of all elements $T \in 2^\N_{fin}$ with support in $[m]:=\{1,\ldots, m\}$. To obtain a contradiction, suppose there is a
  Borel map $f \colon X \to 2^\N_{fin}$ and a cocycle $b \colon G
  \times X \to 2^m$ such that
  $$c(g,x) = f(gx)+b(g,x)+f(x)$$
  for a.e.\ $x \in X$ and every $g\in G$. Note that all elements
  of $2^\N_{fin}$ are involutions, and so $-f(x)=f(x)$.

  Let $\psi(x)=f(x)\vartriangle\phi(x)$. Observe that
 \begin{eqnarray*}
 \psi(gx) &=& f(gx)\vartriangle\phi(gx) = f(gx)\vartriangle c(g,x)\vartriangle \phi(x) = f(gx)\vartriangle f(gx)\vartriangle b(g,x)\vartriangle f(x)\vartriangle \phi(x) \\
 &=&b(g,x)\vartriangle \psi(x)
 \end{eqnarray*}
  for a.e.\ $x\in X$ and every $g\in G$.

  Let $\tilde \psi(x) = \psi(x)
  \setminus [m]$, $\tilde f(x) = f(x) \setminus [m]$ and $\tilde
  \phi(x) = \phi(x) \setminus [m]$. Then, since $b(g,x) \subseteq
  [m]$,
  $$\tilde\psi(gx) = (b(g,x)\vartriangle \psi(x)) \setminus [m] = \tilde\psi(x),$$
  and $\tilde\psi$ is $G$-invariant. Because $G \cc
  (X,\eta)$ is ergodic, there is an element $R \in 2^{\N \setminus [m]}$ such
  that $\tilde\psi(x)=R$ for a.e.\ $x$. Thus
  $$R=\tilde\psi(x)=\tilde f(x)\vartriangle  \tilde\phi(x),$$
  which implies $\tilde\phi(x) \vartriangle R = \tilde f(x) \in
  2^\N_{fin}$ for a.e.\ $x \in X$.

  Let $D'_n = D_n$ if $n \not \in R$, and otherwise let $D'_n = X
  \setminus D_n$, the complement of $D_n$. Then, for a.e.\ $x \in X$,
  there are only a finite number of elements $j \in \N \setminus [m]$
  such that $x \in D_j'$; these are precisely the elements in $\tilde
  f(x)$.

  It follows that if we let $A_n = \{x \in X \,:\, \max(\tilde f(x)) \geq
  n\}$, then $\lim_n\eta(A_n) = 0$. But $A_n$ contains $D_n'$, and so
  $\eta(A_n) \geq \eta(D_n')$, in contradiction to the fact that
  $\lim_n\eta(D_n') = 1/2$.

\end{proof}

\section{Entropy for skew-products}\label{sec:entropy-of-skew}

\begin{lem}\label{lem:entropy}
  Let $G$ be a countable group with a probability measure $\mu$. Let
  $G \cc (X,\eta)$ be a probability measure preserving action, and
  let $c:G \times X \to \Gamma$ be a cocycle to a countable group
  $\Gamma$. Also, let $\Gamma\cc (W,\omega)$ be a nonsingular action
  on the probability space $(W,\omega)$.  Let $G \cc X \times W$ be
  the skew-product action
  $$g(x,w) = (gx, c(g,x)w).$$
  Then
  $$h_\mu(X\times W,\eta \times \omega) = \int_X h_{\mu_x}(W,\omega)~d\eta(x)$$
  where $\mu_x$ is the pushforward of $\mu$ under the map $g \mapsto
  c(g,x)$.
\end{lem}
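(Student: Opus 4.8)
The plan is to reduce the Furstenberg entropy of the skew-product to a fiberwise computation by first identifying the relevant Radon--Nikodym derivative and then integrating out the $X$-coordinate. Write $S_g(x,w) = (gx, c(g,x)w)$ for the skew-product transformation, so that the factor $g^{-1}_*(\eta\times\omega)$ appearing in the entropy formula is the pushforward of $\eta\times\omega$ under $S_{g^{-1}}(x,w)=(g^{-1}x, c(g^{-1},x)w)$.

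The key step is the Radon--Nikodym computation. I would disintegrate $\eta\times\omega$ over the $X$-coordinate as $\int_X \delta_x\otimes\omega\,d\eta(x)$, push forward by $S_{g^{-1}}$ to obtain $\int_X \delta_{g^{-1}x}\otimes (c(g^{-1},x))_*\omega\,d\eta(x)$, and then change variables $y = g^{-1}x$. Here the hypothesis that $G\cc(X,\eta)$ is measure preserving is essential: it gives $d\eta(gy)=d\eta(y)$, so the pushforward again projects to $\eta$ on $X$. Using the cocycle identity $c(e,y)=c(g^{-1},gy)c(g,y)$ to rewrite $c(g^{-1},gy)=c(g,y)^{-1}$, this yields $S_{g^{-1},*}(\eta\times\omega)=\int_X \delta_y\otimes (c(g,y)^{-1})_*\omega\,d\eta(y)$. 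Since both this measure and $\eta\times\omega$ have the same $X$-marginal $\eta$, their Radon--Nikodym derivative is purely vertical and computed fiberwise:
\[
\frac{dS_{g^{-1},*}(\eta\times\omega)}{d(\eta\times\omega)}(y,w) = \frac{d(c(g,y)^{-1})_*\omega}{d\omega}(w).
\]

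Substituting this into the definition of $h_\mu$ produces a sum over $g$ of a double integral over $X\times W$, and I would then exchange the sum over $g$ with the integral over $X$. This is justified by Tonelli's theorem: for each fixed $\gamma\in\Gamma$ the inner integral $\int_W -\log\frac{d\gamma^{-1}_*\omega}{d\omega}\,d\omega$ is nonnegative by Jensen's inequality (exactly as in the remark following the definition of Furstenberg entropy), so the full integrand is nonnegative. The only technical point to verify is joint measurability in $(g,y,w)$, which follows from Borel-ness of the cocycle $c$ together with measurability of the Radon--Nikodym cocycle of the $\Gamma$-action. After the exchange, for each fixed $y$ the inner expression regroups, by the definition of $\mu_y$ as the pushforward of $\mu$ under $g\mapsto c(g,y)$, into $\sum_{\gamma\in\Gamma}\mu_y(\gamma)\int_W -\log\frac{d\gamma^{-1}_*\omega}{d\omega}\,d\omega = h_{\mu_y}(W,\omega)$, giving the claimed identity.

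The only genuine obstacle is the Radon--Nikodym computation of the second paragraph; the rest is bookkeeping. The subtlety there is precisely that the derivative carries no contribution from the base, which is what the measure-preserving hypothesis on the $X$-action secures. Were $\eta$ merely quasi-invariant, an extra factor $\frac{dg^{-1}_*\eta}{d\eta}$ would appear and the formula would fail.
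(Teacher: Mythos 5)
Your proposal is correct and follows essentially the same route as the paper's proof: the paper's one-line chain of equalities silently performs exactly your Radon--Nikodym identification $\frac{dg^{-1}_*(\eta\times\omega)}{d(\eta\times\omega)}(x,w)=\frac{d(c(g,x)^{-1})_*\omega}{d\omega}(w)$ (using measure preservation of $\eta$) and then exchanges the sum over $g$ with the integral over $X$ to regroup via $\mu_x$. You have merely made explicit the details the paper omits --- the change of variables with the cocycle identity, and the Tonelli justification via nonnegativity of each fiber term.
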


\begin{proof}
  \begin{eqnarray*}
    h_\mu(X\times W, \eta \times \omega) &=& 
    -\sum_{g\in G} \mu(g) \int_{X\times W}  \log \left( \frac{d g_*^{-1} (\eta \times \omega)}{d\eta \times \omega}(x,w)\right)~d\eta(x)d\omega(w)\\
    &=& -\sum_{g\in G} \mu(g) \int_{X\times W}  \log \left( \frac{d c(g,x)_*^{-1}\omega}{d \omega}(w)\right)~d\eta(x)d\omega(w)\\
    &=& -\int _X \sum_{\gamma \in \Gamma}  \mu_x(\gamma) \int_W \log \left( \frac{d \gamma_*^{-1}\omega}{d \omega}(w)\right)~d\omega(w) d\eta(x)\\
    &=&\int_X h_{\mu_x}(W,\omega)~d\eta(x).
  \end{eqnarray*}
\end{proof}
 
\begin{lem}
  \label{lemma:converse-lln}
  Let $E_n=\{x \in 2^\N: x_n=1\}$. Let $\nu$ be a Borel probability
  measure on $2^\N$ such that $\nu(E_n) \leq 1/2$ for infinitely many
  $n \in \N$, and let $\omega_p$ be the i.i.d.\ Bernoulli $p$ measure
  on $2^\N$, with $p > 1/2$. Then $\nu$ is not absolutely continuous
  with respect to $\omega_p$.
\end{lem}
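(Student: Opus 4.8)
The plan is to argue by contradiction: assume $\nu \ll \omega_p$ and derive $p \le 1/2$, contradicting the hypothesis $p > 1/2$. The crucial observation is that one should not work with the full sequence of coordinates but rather restrict to the subsequence along which the hypothesis actually bites.

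First I would fix an infinite sequence $n_1 < n_2 < \cdots$ with $\nu(E_{n_k}) \le 1/2$ for all $k$, which exists by assumption. Under $\omega_p = \prod_\N B(p)$ the coordinate functions $x \mapsto x_n$ are i.i.d.\ Bernoulli($p$) random variables, so in particular $(x_{n_k})_{k \ge 1}$ are i.i.d.\ Bernoulli($p$). By the strong law of large numbers, the set
$$B = \Bigl\{x \in 2^\N : \lim_{K \to \infty} \frac{1}{K}\sum_{k=1}^K x_{n_k} = p\Bigr\}$$
has full $\omega_p$-measure.

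Next, if $\nu \ll \omega_p$ then $\nu(2^\N \setminus B) = 0$, so $\nu(B) = 1$ and the same Ces\`aro averages converge to $p$ for $\nu$-a.e.\ $x$. Since the partial averages $S_K(x) = \frac{1}{K}\sum_{k=1}^K x_{n_k}$ take values in $[0,1]$, the bounded convergence theorem gives $\int S_K \, d\nu \to p$. On the other hand, by linearity and the choice of the subsequence, $\int S_K \, d\nu = \frac{1}{K}\sum_{k=1}^K \nu(E_{n_k}) \le 1/2$ for every $K$. Passing to the limit yields $p \le 1/2$, the desired contradiction.

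The only genuinely subtle point—and what I expect to be the main obstacle—is recognizing that the full-sequence law of large numbers does \emph{not} suffice: the Ces\`aro average $\frac{1}{N}\sum_{n=1}^N \nu(E_n)$ can perfectly well tend to $p$ even when $\nu(E_n) \le 1/2$ holds only along a sparse subsequence. Restricting to $\{n_k\}$ is precisely what converts the pointwise SLLN, transported to $\nu$ via absolute continuity, into the clean uniform bound $\int S_K \, d\nu \le 1/2$. Everything else is a routine combination of absolute continuity with bounded convergence.
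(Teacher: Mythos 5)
Your proof is correct and follows essentially the same route as the paper: restrict to the subsequence where $\nu(E_{n_k})\le 1/2$, apply the strong law of large numbers under $\omega_p$ along that subsequence, and compare with the integral bound $\frac{1}{K}\sum_{k=1}^K \nu(E_{n_k})\le 1/2$. The only cosmetic difference is that the paper phrases the limit step via Fatou's lemma applied to the $\liminf$ of the partial averages, whereas you argue by contradiction and use bounded convergence; both are valid.
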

\begin{proof}
  Let $\{i_k\}_{k = 1}^\infty$ be a sequence of indices such that
  $\nu(E_{i_k}) < 1/2$. For $x\in 2^\N$, denote
  \begin{align*}
    S_n(x) = \frac{1}{n}\sum_{k=1}^n1_{E_{i_k}}(x),
  \end{align*}
  and let
  \begin{align*}
    S(x) = \liminf_nS_n(x),
  \end{align*}
  so that, clearly, $\omega_p(\{x \in 2^\N\,:\,S(x) \neq p\}) = 0$. But
  \begin{align*}
  \int_X  S_n(x)~d\nu (x) = \frac{1}{n}\sum_{k=1}^n\nu(E_{i_k})\leq 1/2,
  \end{align*}
  and so by Fatou's Lemma,
  \begin{align*}
   \int_X  S(x)~d\nu (x) \leq \liminf_n\frac{1}{n}\sum_{k=1}^n\nu(E_{i_k}) \leq 1/2 <p.
  \end{align*}
  Hence $\nu(\{x \in 2^\N\,:\,S(x) \neq p\}) > 0$, and $\nu$
  is not absolutely continuous with respect to $\omega_p$.
\end{proof}

\begin{lem}
  \label{lem:invariant}
  Let $G \cc (X,\eta)$ be an ergodic probability measure preserving
  action, let $c:G \times X \to 2^\N_{fin}$ be a cocycle, and let
  $2^\N_{fin}$ act on $2^\N$ by symmetric difference. Consider the
  skew-product action $G \cc X \times 2^\N$ given by $g(x,y)=(gx,
  c(g,x)y)$.

  Denote by $\omega_p$ the Bernoulli $p$ i.i.d.\ measure on $2^\N$.
  For any $1/2<p<1$, if there exists a $G$-invariant probability
  measure that is absolutely continuous with respect to $\eta \times
  \omega_p$, then $c$ is cohomologous to a cocycle with a finite
  image.
\end{lem}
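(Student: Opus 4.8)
The plan is to extract the transfer function directly from the invariant measure, by reading off fiber-by-fiber which coordinates are ``biased the wrong way'' under $\omega_p$.

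First I would reduce everything to the fibers. Let $\nu$ be a $G$-invariant probability measure with $\nu \ll \eta \times \omega_p$. Its projection to $X$ is $G$-invariant and absolutely continuous with respect to $\eta$, so since $G \cc (X,\eta)$ is ergodic the Radon--Nikodym derivative is a.e.\ constant and the projection is exactly $\eta$. Disintegrating $\nu = \int_X \nu_x \, d\eta(x)$ over this projection, absolute continuity passes to the fibers, giving $\nu_x \ll \omega_p$ for $\eta$-a.e.\ $x$, while $G$-invariance of $\nu$ becomes the equivariance relation $\nu_{gx} = c(g,x)_* \nu_x$, valid for each $g\in G$ and $\eta$-a.e.\ $x$ (using countability of $G$ to fix a single conull set).

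Next I would record the one-dimensional marginals. For $n \in \N$ put $q_n(x) = \nu_x(E_n)$. Because $2^\N_{fin}$ acts by flipping coordinates, the equivariance relation becomes
\begin{align*}
  q_n(gx) = \begin{cases} q_n(x), & n \notin c(g,x),\\ 1 - q_n(x), & n \in c(g,x).\end{cases}
\end{align*}
The crucial input is Lemma~\ref{lemma:converse-lln}: since $\nu_x \ll \omega_p$ for a.e.\ $x$, its contrapositive forbids $q_n(x) \le 1/2$ for infinitely many $n$. Hence $F(x) := \{n \in \N : q_n(x) \le 1/2\}$ is finite for $\eta$-a.e.\ $x$, defining a measurable map $F : X \to 2^\N_{fin}$ which will serve as the transfer function $\beta$. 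Tracking how $F$ transforms under the displayed rule, I expect
\begin{align*}
  F(gx) \vartriangle F(x) = c(g,x) \setminus Z(x), \qquad Z(x) := \{n : q_n(x) = 1/2\},
\end{align*}
so that, writing $b(g,x) = c(g,x) \cap Z(x)$ and using that all elements are involutions, $c(g,x) = F(gx) + b(g,x) + F(x)$; thus $c$ is cohomologous to $b$. To finish I would show $b$ has finite image: from the marginal rule, membership in $Z$ is unaffected by the action ($q_n(x)=1/2 \iff q_n(gx)=1/2$ in both cases above), so $Z$ is a $G$-invariant measurable map and, by ergodicity, equals a constant set $Z_0$ a.e.; moreover $Z_0 \subseteq F(x)$ is finite. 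Choosing $m$ with $Z_0 \subseteq [m]$ forces $b(g,x) \subseteq Z_0 \subseteq [m]$, i.e.\ $b$ takes values in the finite group $2^m$.

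The main obstacle is the boundary case $q_n(x)=1/2$. Without it one would want $F(gx)\vartriangle F(x)$ to equal $c(g,x)$ outright, making $c$ a coboundary; but coordinates that are exactly unbiased under $\nu_x$ are never corrected by $F$, and these are precisely the coordinates in $Z(x)$. Isolating them and proving $Z$ is $G$-invariant (hence a single finite set by ergodicity) is exactly what upgrades ``cohomologous to a coboundary away from an unbiased part'' to ``cohomologous to a genuinely finite-image cocycle.'' The remaining points are routine: measurability of $x \mapsto \nu_x$ and of $q_n, F, Z$, and the standard disintegration facts underlying the equivariance relation.
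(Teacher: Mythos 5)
Your proposal is correct and follows essentially the same route as the paper: disintegrate the invariant measure over $\eta$, apply Lemma~\ref{lemma:converse-lln} to the fiber measures, and exploit the flip rule for the marginals $\nu_x(E_n)$ together with the $G$-invariance of the level-$1/2$ set. The only difference is bookkeeping — you use the single transfer function $F(x)=\{n: q_n(x)\le 1/2\}$ and land on $b=c\cap Z_0$, whereas the paper first fixes the constant set $\{n:\lambda_x(E_n)=1/2\}=T$ and then conjugates $c\setminus[\max T]$ to the identity via $\{n>\max T:\lambda_x(E_n)<1/2\}$ — and your computation of $F(gx)\vartriangle F(x)=c(g,x)\setminus Z(x)$ checks out in all cases.
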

\begin{proof}
  Let $\lambda \ll \eta \times \omega_p$ be a $G$-invariant probability 
  measure. Note that
  $\lambda$ projects to a $G$-invariant probability measure on $X$
  which is absolutely continuous with respect to $\eta$. Because $\eta$
  is ergodic, this projection must equal $\eta$. Hence
  \begin{align*}
    d\lambda(x,y) = d\eta(x)d\lambda_x(y)
  \end{align*}
  where $\lambda=\int \delta_x\times \lambda_x~d\eta(x)$ is the disintegration of $\lambda$ over $\eta$ (here $\delta_x$ is the Dirac measure concentrated on $\{x\}$). Also
  \begin{align*}
    \frac{d\lambda}{d\eta\times\omega_p}(x,y) =
    \frac{d\lambda_x}{d\omega_p}(y).
  \end{align*}
  It follows that $\lambda_x \ll \omega_p$, for a.e.\ $x$.  By the
  invariance of $\lambda$ we have that, for every $g \in G$,
  $g_*\lambda = \lambda$ which implies 
  $$\lambda_{gx}=(g_*\lambda)_{gx} = c(g,x)_*\lambda_x$$
  for a.e. $x$.

  Let $E_n=\{x \in 2^\N: x_n=1\}$, and let $f(x) = \{n \in \N :
  \lambda_x(E_n)=1/2\}$. By Lemma~\ref{lemma:converse-lln} $f(x)$ is
  finite for a.e.\ $x \in X$, since $\lambda_x \ll \omega_p$ for a.e.\
  $x \in X$.  Then
  \begin{align*}
    f(gx) &= \{n \in \N : \lambda_{gx}(E_n)=1/2\}\\
    &= \{n \in \N : c(g,x)_*\lambda_x(E_n)=1/2\}\\
    &= f(x),
  \end{align*}
 since for any measure $\nu$ on $2^\N$ and $T \in 2^\N_{fin}$,
  $T_*\nu(E_n) = 1 - \nu(E_n)$ if $n \in T$ and $T_*\nu(E_n) =
  \nu(E_n)$ otherwise; in any case, $T_*\nu(E_n) = 1/2$ iff $\nu(E_n)
  = 1/2$.

  By the ergodicity of $\eta$ it follows that there exists a $T \in
  2^\N_{fin}$ such that $f(x) = T$ for a.e.\ $x \in X$. Let $m =
  \max(T)$, and denote $\tilde c(g,x) = c(g,x) \setminus
  \{1,\ldots,m\}$.
   
  Let $\psi(x) = \{n > m: \lambda_x(E_n) < 1/2\}$.  Then
  \begin{align*}
    \psi(gx) &= \{n > m : \lambda_{gx}(E_n) < 1/2\}\\
    &= \{n > m: c(g,x)_*\lambda_x(E_n) < 1/2\}\\
    &= \tilde c(g,x) \vartriangle \{n > m: \lambda_x(E_n) < 1/2\}\\
    &= \tilde c(g,x)\vartriangle \psi(x),
  \end{align*}
  where the third equality is a consequence of the fact that
  $\lambda_x(E_n) \neq 1/2$ for every $n > m$ and a.e.\ $x \in
  X$. Note that $\psi(x)$ is finite, by another application of
  Lemma~\ref{lemma:converse-lln}.  Hence $\tilde c$ is a coboundary,
  and so $c$ is cohomologous to the cocycle $c\vartriangle \tilde{c}$ whose image is in $2^m$.
\end{proof}

\section{Proof of main theorem}\label{sec:proof}
\begin{proof}[Proof of Theorem~\ref{thm:main}]

  By Theorem~\ref{thm:nevo} we may assume $G$ does not have
  property (T). Let $\mu$ be a generating measure on $G$ and let
  $\rho:G \to \N$ be a proper function such that
  $$\sum_{g\in G} \mu(g) \rho(g) \le 2.$$
  By Proposition~\ref{prop:key} there exists an ergodic probability
  measure preserving action $G \cc (X,\eta)$ and a cocycle $c:G \times
  X \to 2^\N_{fin}$ that is not cohomologous to a cocycle with a
  finite image. Moreover,
  $$\int_X \max(c(g,x))~d\eta(x) \le \rho(g)$$
  for any $g\in G$. By Proposition~\ref{prop:Z} for every
  $\epsilon > 0$ there exists a Bernoulli probability measure
  $\omega_{p(\epsilon)}$ on $2^\N$ such that for any probability measure
  $\nu$ on $2^\N_{fin}$,
  \begin{align*}
    h_\nu(2^\N,\omega_{p(\epsilon)}) \leq \epsilon \sum_{T \in
      2^\N_{fin}}\nu(T)\max(T).
  \end{align*}   

  Consider the skew-product action $G \cc X \times 2^\N$ given by
  $g(x,T) = (gx, c(g,x)T)$. By Lemma~\ref{lem:entropy},
  \begin{eqnarray*}
    h_\mu(X\times 2^\N, \eta \times \omega_{p(\epsilon)}) &=& \int_X
    h_{\mu_x}(2^{\N},\omega_{p(\epsilon)}) ~d\eta(x) \\
    &\le& \epsilon \int_X \sum_{T \in 2^\N_{fin}} \mu_x(T)\max(T)~d\eta(x).
  \end{eqnarray*}
  Note that, by Proposition~\ref{prop:key}, we know that $$\int_X \sum_{T
    \in 2^\N_{fin}}\mu_x(T)\max(T)~d\eta (x) = \sum_{g\in G} \mu(g) \int_X
  \max(c(g,x)) ~d\eta(x) \le \sum_{g\in G} \mu(g) \rho(g)\le 2,$$
  and so
  \begin{eqnarray*}
    \lim_{\epsilon \to 0} h_\mu(X\times 2^\N, \eta \times \omega_{p(\epsilon)}) = 0.
  \end{eqnarray*}

  Since $c$ is not cohomologous to a cocycle with a finite image,
  Lemma~\ref{lem:invariant} implies that each measure $\eta \times
  \omega_{p(\epsilon)}$ is properly nonsingular, and furthermore
  almost every measure in its ergodic decomposition is properly
  nonsingular. Since the entropy of $\eta \times \omega_{p(\epsilon)}$
  is a convex combination of the entropies of its ergodic components,
  it follows that there exist ergodic, properly nonsingular
  $G$-actions with arbitrarily small entropy, and therefore $(G,\mu)$
  does not have an entropy gap.
\end{proof}
\appendix
\section{Groups with property (T)}
\label{sec:nevo}
\begin{thm}[Nevo]
  \label{thm:nevo}
  Let $G$ be a countable group with property (T), and let
  $\mu$ be a generating measure. Then $(G,\mu)$ has an entropy gap.
\end{thm}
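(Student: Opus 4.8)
The plan is to connect the Furstenberg entropy of a nonsingular action to the almost-invariance of a canonical vector in an associated unitary representation, and then to invoke property (T) in the form of a spectral gap. To a nonsingular action $G\cc(X,\eta)$ I associate its Koopman representation $\pi$ on $L^2(X,\eta)$, given by $(\pi(g)f)(x)=\big(\tfrac{dg^{-1}_*\eta}{d\eta}(x)\big)^{1/2} f(g^{-1}x)$; this is unitary precisely because of the square root of the Radon--Nikodym derivative. The distinguished unit vector is the constant function $\mathbf 1$, and the point is that $\pi(g)\mathbf 1=\big(\tfrac{dg^{-1}_*\eta}{d\eta}\big)^{1/2}$ differs from $\mathbf 1$ by an amount controlled by the entropy.

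The proof would rest on two inequalities. First, writing $t_g=\tfrac{dg^{-1}_*\eta}{d\eta}$ and using $\int t_g\,d\eta=1$, a direct computation gives $\|\pi(g)\mathbf 1-\mathbf 1\|^2=2\big(1-\int_X\sqrt{t_g}\,d\eta\big)$. The elementary pointwise bound $\log s\le s-1$ with $s=\sqrt{t_g}$ yields $-\log t_g\ge 2(1-\sqrt{t_g})$, and integrating against $\eta$ shows that the per-$g$ contribution to the entropy dominates the almost-invariance defect:
\[
\|\pi(g)\mathbf 1-\mathbf 1\|^2\ \le\ \int_X -\log\tfrac{dg^{-1}_*\eta}{d\eta}\,d\eta .
\]
Averaging over $\mu$ gives $\sum_g\mu(g)\|\pi(g)\mathbf 1-\mathbf 1\|^2\le h_\mu(X,\eta)$. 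Second, I would use that property (T) together with $\mu$ generating is equivalent to a spectral gap (Kazhdan constant)~\cite{bekka2008kazhdan}: there is $\kappa=\kappa(G,\mu)>0$ such that every unitary representation of $G$ with no nonzero invariant vectors satisfies $\sum_g\mu(g)\|\pi(g)v-v\|^2\ge\kappa\|v\|^2$ for all $v$; the passage from a possibly non-symmetric $\mu$ to this form is harmless, since $\sum_g\mu(g)\|\pi(g)v-v\|^2$ is unchanged when $\mu$ is replaced by its symmetrization $\tfrac12(\mu+\check\mu)$, which is again generating.

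To combine these I must check that the Koopman representation of an \emph{ergodic, properly nonsingular} action has no nonzero invariant vectors --- this is exactly where both hypotheses enter, and I expect it to be the main obstacle (and the reason the statement fails without either). If $f\ne 0$ satisfies $\pi(g)f=f$ for all $g$, then $|f|^2\,\eta$ is a finite $G$-invariant measure absolutely continuous with respect to $\eta$; its support is $G$-invariant, so by ergodicity of $\eta$ it has full measure, whence the normalization of $|f|^2\,\eta$ is a $G$-invariant probability measure equivalent to $\eta$, contradicting proper nonsingularity. Granting this, the spectral-gap inequality applies to the unit vector $\mathbf 1$, and the two bounds give $\kappa\le\sum_g\mu(g)\|\pi(g)\mathbf 1-\mathbf 1\|^2\le h_\mu(X,\eta)$ for every ergodic, properly nonsingular action. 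Thus $(G,\mu)$ has an entropy gap with $\epsilon=\kappa$. Beyond the no-invariant-vectors step, the remaining care is bookkeeping: verifying unitarity and the cocycle identity for $\pi$, and the measure-theoretic justification that $\pi$-invariant vectors correspond to invariant densities.
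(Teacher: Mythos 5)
Your proposal is correct and follows essentially the same route as the paper's proof: the Koopman representation on $L^2(X,\eta)$, an elementary logarithm/Jensen bound showing the entropy dominates the almost-invariance defect of the constant vector $\mathbf 1$, the observation that an ergodic properly nonsingular action yields no invariant vectors, and the uniform spectral gap supplied by property (T). The only difference is one of packaging: you apply the additive Kazhdan-type inequality $\sum_g\mu(g)\|\pi(g)\mathbf 1-\mathbf 1\|^2\ge\kappa$ directly to $\mu$ (which does require the small standard direct-sum or finite-generating-set argument to get uniformity for a possibly infinitely supported generating $\mu$), whereas the paper first replaces $\mu$ by the everywhere-supported average $\bar\mu=\sum_{n}2^{-n-1}\mu^n$ and uses the operator-norm form $\|\pi(\bar\mu)\|<1$.
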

The following proof is based on Nevo's~\cite{Ne03}.
\begin{proof}  

  Let $\bar\mu$ be the measure on $G$ given by
  \begin{align*}
    \bar\mu = \sum_{n=0}^\infty2^{-n-1}\mu^n,
  \end{align*}
  where $\mu^n$ is the convolution of $\mu$ with itself $n$
  times. Then $h_{\bar\mu}(X,\eta) = h_\mu(X,\eta)$ (see,
  e.g.~\cite{kaimanovich1983random},
  or~\cite[Section~2.8]{hartman2012furstenberg}). The advantage of
  $\bar\mu$ is that it is supported everywhere on $G$.

  Consider, given a nonsingular action $G\cc (X,\eta)$, the unitary
  representation $\pi$ on $L^2(X,\eta)$ given by
  \begin{align*}
    \pi(g)f(x)=\sqrt{\frac{dg_*\eta}{d\eta}(x)}f(g^{-1}x).
  \end{align*}
  Note that $\pi(g)^* = \pi(g^{-1})$.
  
  It is easy to check that by Jensen's inequality, for any $g\in G$,
  \begin{align}\label{eq:Jensen-bound}
    -2\log \left\langle 1,\pi(g)1\right\rangle = -2\log
    \left\langle\pi(g^{-1})1,1\right\rangle \le \int_X
    -\log\frac{dg^{-1}_*\eta}{d\eta}(x) d\eta(x).
  \end{align}
  Consider the Markov operator $\pi(\bar\mu) \colon L^2(X,\eta)\to
  L^2(X,\eta)$ given by $\pi(\bar\mu)=\sum_{g\in G}\bar\mu(g)\pi(g)$.
  Then~\eqref{eq:Jensen-bound} yields the bound $-2\log \| \pi(\bar\mu) \|
  \le h_{\bar\mu}(X,\eta)$, by another application of Jensen's inequality.
  
  Denote
  \begin{align*}
    \|\bar\mu\|_T=\sup\left\{ \| \pi\left(\bar\mu\right)\| :\pi\mbox{ is a
        unitary representation of \ensuremath{G} with no invariant
        vectors}\right\}.
  \end{align*}
  If $G$ has property (T) then $\|\bar\mu\|_T <1$ (see, e.g., Bekka, de
  La Harpe and Valette~\cite[Corollary 6.2.3]{bekka2008kazhdan}); here
  we use the fact that $\bar\mu$ is supported everywhere.

  When $G \cc (X,\eta)$ is properly nonsingular ergodic action,
  $L^2(X,\eta)$ has no invariant vectors (see~\cite[Lemma
  7.2]{Bo14}). It therefore follows that
  \begin{align*}
    0 < -2\log\|\bar\mu\|_T \leq -2\log\|\pi(\bar\mu)\| \leq h_{\bar\mu}(X,\eta) =
    h_\mu(X,\eta),
  \end{align*}
  and $(G,\mu)$ has an entropy gap, with $\epsilon(\mu) = -2\log\|\bar\mu\|_T$.
\end{proof}

\bibliography{entropy_t}
\bibliographystyle{abbrv}

\end{document}